\theoremstyle{plain}
    \newtheorem{theorem}{Theorem}
    \newtheorem{lemma}[theorem]{Lemma}
    \newtheorem{corollary}[theorem]{Corollary}
    \newtheorem{proposition}[theorem]{Proposition}
 \theoremstyle{definition}
    \newtheorem{remark}[theorem]{Remark}
\theoremstyle{remark}
\numberwithin{equation}{section}
    \newcommand{\R}{\mathbb{R}}
    \newcommand{\C}{\mathbb{C}} 
    \newcommand{\N}{\mathbb{N}}
    \newcommand{\Z}{\mathbb{Z}}
\newcommand{\kg}{\mathfrak{g}} 
\newcommand{\kk}{\mathfrak{k}}  
\newcommand{\kp}{\mathfrak{p}} 
\newcommand{\cS}{\mathcal{S}}
\DeclareMathOperator{\SO}{SO}
\DeclareMathOperator{\Spin}{Spin}
\newcommand{\Spinc}{\Spin^c}
 \newcommand{\dirac}{\partial \!\!\! \slash}
\DeclareMathOperator{\Ind}{Ind}
\DeclareMathOperator{\Ad}{Ad}
\DeclareMathOperator{\DInd}{D-Ind}
\DeclareMathOperator{\KInd}{K-Ind}
\DeclareMathOperator{\ind}{index}
\newcommand{\red}{r}
\newcommand{\Aav}{\hat A_G}
\begin{document}

\title{$\Spin$-structures and proper group actions}

\author{Peter Hochs\footnote{
University of Adelaide, \texttt{peter.hochs@adelaide.edu.au}} 
\hspace{1mm} 
and Varghese Mathai\footnote{
University of Adelaide, \texttt{mathai.varghese@adelaide.edu.au}}}  
\date{}


\maketitle

\begin{abstract}
We generalise Atiyah and Hirzebruch's vanishing theorem for actions by compact groups on compact $\Spin$-manifolds to possibly noncompact groups acting properly and cocompactly on possibly noncompact $\Spin$-manifolds. As corollaries, we obtain some vanishing results for an $\hat A$-type genus.
\end{abstract}

\tableofcontents

\section{Introduction}

In 1970, Atiyah and Hirzebruch \cite{AH} proved the following remarkable result. 
\begin{theorem}\label{thm:A-H}
Let $N$ be a compact, connected even-dimensional manifold and $K$ be a compact
connected Lie group acting smoothly and non-trivially on $N$. Suppose also that $N$ has a $K$-equivariant Spin-structure. Then the equivariant index of the Dirac operator on $N$ vanishes in the representation ring of $K$,
\begin{equation}
\ind_K(\dirac_N) = 0 \quad \in R(K).
\end{equation}
In particular, the $\hat A$-genus of $N$ is zero.
\end{theorem}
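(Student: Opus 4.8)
The plan is to reduce to a circle action and then run Atiyah and Hirzebruch's ``rigidity'' argument: combine the equivariant fixed-point formula for $\dirac_N$ with the boundedness of its fixed-point contributions forced by the $\Spin$-structure. First, I would reduce to $K=S^1$. Let $T\subseteq K$ be a maximal torus; since $K$ is compact and connected, every element of $K$ is conjugate into $T$, so the closed kernel of the $K$-action cannot contain $T$, and therefore $T$ acts non-trivially with ineffective kernel a proper closed subgroup $T_0\subsetneq T$. As $R(K)\hookrightarrow R(T)$ is injective, it suffices to prove $\ind_T(\dirac_N)=0$. Write $\ind_T(\dirac_N)=\sum_i c_i\chi_i$ with $c_i\in\Z\setminus\{0\}$ and the characters $\chi_i$ of $T$ distinct. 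Choosing a circle $S\hookrightarrow T$ whose direction in $\mathrm{Lie}(T)$ avoids both $\mathrm{Lie}(T_0)$ and the finitely many hyperplanes on which two of the $\chi_i$ coincide, the group $S$ acts non-trivially on $N$ and $\ind_S(\dirac_N)=\sum_i c_i(\chi_i|_S)$ is a non-zero element of $R(S)\cong\Z[t^{\pm1}]$. Hence the general statement follows from the case $K=S^1$ acting non-trivially, in which case, $N$ being connected, $N^{S^1}$ is a submanifold of positive codimension.

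Next, I would invoke the Atiyah--Bott--Segal--Singer Lefschetz fixed-point theorem for the Dirac operator. Put $I(t)=\ind_{S^1}(\dirac_N)\in R(S^1)=\Z[t^{\pm1}]$, a genuine virtual character. Then $I(t)=\sum_F\mu_F(t)$ for all $t\in S^1$ outside the finite set of roots of unity for which $N^t\neq N^{S^1}$, the sum running over the connected components $F$ of $N^{S^1}$. Fixing $F$ and writing $\nu_F=\bigoplus_{\theta>0}\nu_\theta$ for the decomposition into $S^1$-weight spaces — with the complex structure making every weight $\theta$ positive — and $x_{\theta,k}$ for the Chern roots of $\nu_\theta$, the contribution has the shape
\[
\mu_F(t)=\pm\int_F\hat A(TF)\,\prod_{\theta>0}\,\prod_{k=1}^{\dim_\C\nu_\theta}\frac{1}{t^{\theta/2}e^{x_{\theta,k}/2}-t^{-\theta/2}e^{-x_{\theta,k}/2}}.
\]
The $S^1$-equivariant $\Spin$-structure on $N$ is precisely what makes the square-root ambiguities here — the half-integer powers $t^{\theta/2}$ and the square roots $(\det_\C\nu_\theta)^{1/2}$ of determinant line bundles — globally coherent, so that $\mu_F$ is a well-defined meromorphic function (of $t^{1/2}$) on $\C^\times$ with poles confined to $|t|=1$; and, the $\Spin$-structure being genuine rather than merely $\Spin^c$, no auxiliary twisting line bundle, hence no spurious $S^1$-weight, enters $\mu_F$.

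Now comes the decisive point. Both sides of $I(t)=\sum_F\mu_F(t)$ are restrictions of meromorphic functions on $\C^\times$ (passing to a double cover if honest functions are wanted), so the identity persists there. Set $d_F=\sum_{\theta>0}\theta\dim_\C\nu_\theta$; since $N$ is connected and $S^1$ acts non-trivially, $\nu_F\neq 0$, whence $d_F>0$. As $t\to 0$ each factor $\bigl(t^{\theta/2}e^{x/2}-t^{-\theta/2}e^{-x/2}\bigr)^{-1}$ equals $-t^{\theta/2}e^{x/2}\bigl(1+O(t^\theta)\bigr)$, so $\mu_F(t)=O(t^{d_F/2})\to 0$; symmetrically, each factor equals $t^{-\theta/2}e^{-x/2}\bigl(1+O(t^{-\theta})\bigr)$ as $t\to\infty$, so $\mu_F(t)=O(t^{-d_F/2})\to 0$. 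Thus the Laurent polynomial $I(t)=\sum_F\mu_F(t)$ tends to $0$ at both $0$ and $\infty$, which forces $I(t)=0$. Unwinding the reduction gives $\ind_K(\dirac_N)=0$ in $R(K)$, hence $\ind(\dirac_N)=0$; since $\hat A(N)=\ind(\dirac_N)$ by the Atiyah--Singer index theorem, also $\hat A(N)=0$.

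The step I expect to be the main obstacle is the bookkeeping behind the displayed formula for $\mu_F$: one must track the half-integer $S^1$-weights and the $\Spin^c$-type square roots of determinant line bundles that enter the normal contribution, check that the (essentially unique) lift of the action to the $\Spin$-frame bundle renders them coherent, and — above all — verify that this lift introduces no net weight capable of destroying the decay $O(t^{\pm d_F/2})$. This is exactly where ``$\Spin$'' is used in place of ``oriented'' or ``$\Spin^c$'', and it is the local model for the localisation estimates that the noncompact, cocompact generalisation pursued in this paper will have to reproduce.
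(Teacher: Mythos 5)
This statement is Atiyah and Hirzebruch's theorem, which the paper quotes from \cite{AH} without proof; it is the input, not a result, of the paper. Your argument is a correct reconstruction of the original Atiyah--Hirzebruch proof: reduce to a generic circle $S^1\subseteq T\subseteq K$ acting non-trivially, apply the Atiyah--Bott--Segal--Singer fixed-point formula, extend the Lefschetz number meromorphically to $\C^\times$ (or its double cover), and use positivity of the normal weights --- guaranteed by connectedness and non-triviality --- to force vanishing at $t=0$ and $t=\infty$, hence vanishing of the Laurent polynomial $\ind_{S^1}(\dirac_N)$. The details you flag as delicate (coherence of the half-integer weights and square roots of determinant bundles, which is exactly where $\Spin$ rather than $\Spinc$ is used) are handled correctly, so there is nothing to fix.
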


Their result then inspired many, especially Witten \cite{Witten} who studied two-dimensional quantum field theories 
and the index of the Dirac operator on free loop space $LN$, relating it to the rigidity of 
certain Dirac-type operators on $N$ and the elliptic genus, which was proved in \cite{Bott-Taubes, Taubes}.

Our goal in this note is to extend Theorem  \ref{thm:A-H} to the non-compact setting. The result is Theorem \ref{thm:H-M}, which can be stated in an equivalent way as Theorem \ref{thm:H-M2}. One consequence is a result related to rigidity, Corollary \ref{cor:rigidity}. These results involve $K$-theory and $K$-homology of $C^*$-algebras, but they have purely differential geometric consequences, as noted in Corollary \ref{cor:Ahat}.

\subsection*{Acknowledgements}

The first author was supported by a Marie Curie fellowship from the European Union. The second author thanks the Australian Research Council for support via the ARC Discovery Project grant DP130103924.

\section{Results and applications}

Let $M$ be a 
manifold, on which a connected Lie group $G$ acts properly and isometrically. Suppose that the action is cocompact, i.e.\ $M/G$ is compact, and that $M$ has a $G$-equivariant $\Spin$-structure. Let 
\[
\ind_G(\dirac_M)  \in K_{\bullet}(C^*_rG)
\]
 be the equivariant index of the associated $\Spin$-Dirac operator. Here $K_{\bullet}(C^*_rG)$ is the $K$-theory of the reduced group $C^*$-algebra of $G$, and $\ind_G$ denotes the analytic assembly map used in the Baum--Connes conjecture \cite{BCH, Kasparov}. If $G$ is compact, then $K_{0}(C^*_rG) = R(G)$ and $K_1(C^*_rG)=\{0\}$, and the analytic assembly map is the usual equivariant index. 
 
 Let $K<G$ be a maximal compact subgroup, and suppose $G/K$ has a $G$-equivariant $\Spin$-structure. This is true for a double cover of $G$, as pointed out in Section \ref{sec slice}. 

The action by $G$ on $M$ will be called \emph{properly trivial} if all stabilisers are maximal compact subgroups of $G$. For a proper action, the stabilisers cannot be larger. The action is called \emph{properly nontrivial} if it is not properly trivial.

For any manifold $X$, we write $\hat A_X$ for the $\hat A$-class of $X$. If $X$ is compact, then we denote its $\hat A$-genus by
 \[
 \hat A(X) := \int_X \hat A_X.
 \]

 Atiyah and Hirzebruch's Theorem \ref{thm:A-H} generalises as follows.
\begin{theorem}\label{thm:H-M}
As above, let $G$ be a connected Lie group, with maximal compact subgroup $K$, such that $G/K$ has a $G$-equivariant $\Spin$-structure. Suppose that $G$ acts properly and cocompactly on a manifold $M$, and that $M$ has a $G$-equivariant $\Spin$-structure. If the action is properly nontrivial, then
\[
\ind_G(\dirac_M) = 0.
\]
\end{theorem}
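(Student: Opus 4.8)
The plan is to reduce Theorem~\ref{thm:H-M} to Atiyah--Hirzebruch's Theorem~\ref{thm:A-H} on a \emph{compact} manifold, using the slice theorem for proper actions.

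\emph{Step 1 (slicing off the group direction).} Since $G$ acts properly and cocompactly, the slice theorem recalled in Section~\ref{sec slice} furnishes a compact $K$-manifold $Y$ and a $G$-equivariant diffeomorphism $M \cong G\times_K Y$. The copy $Y \cong \{[e,y] : y\in Y\}\subset M$ is a $K$-invariant submanifold carrying the given $K$-action, and, using a $G$-invariant horizontal distribution for $M\to G/K$, one gets a $K$-equivariant splitting $TM|_Y \cong TY \oplus \underline{\kg/\kk}$, where $\underline{\kg/\kk} = Y\times\kg/\kk$ with $K$ acting diagonally (adjointly on $\kg/\kk$). As $K$ is maximal compact in the connected group $G$, the base $G/K$ is diffeomorphic to a Euclidean space, hence contractible; so $Y$ is connected (as is $M$). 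Finally, the stabiliser of $[g,y]$ is $gK_yg^{-1}$, which is maximal compact in $G$ iff it is conjugate to $K$ iff (since $K_y\le K$ and $K$ is connected) $K_y = K$; hence the $G$-action on $M$ is properly trivial iff $K$ acts trivially on $Y$. Thus the hypothesis that the action is properly nontrivial says exactly that $K$ acts nontrivially on the compact connected manifold $Y$.

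\emph{Step 2 (transporting Spin-structures and the index).} The $G$-equivariant $\Spin$-structure on $M$ restricts to a $K$-equivariant $\Spin$-structure on $TM|_Y$, and the $G$-equivariant $\Spin$-structure on $G/K$ is exactly a $K$-equivariant $\Spin$-structure on $\kg/\kk$; via the splitting of Step~1 these combine into a $K$-equivariant $\Spin$-structure on $TY$, and hence a $K$-equivariant $\Spin$-Dirac operator $\dirac_Y$ on $Y$. The crux is then the induction formula
\[
\ind_G(\dirac_M) \;=\; \DInd\big(\ind_K(\dirac_Y)\big) \qquad \in K_\bullet(C^*_rG),
\]
where $\DInd\colon R(K) = K_\bullet(C^*_rK)\to K_\bullet(C^*_rG)$ is Dirac induction: it sends a virtual $K$-representation $W$ to the analytic index of the $\Spin$-Dirac operator of $G/K$ twisted by the flat bundle $G\times_K W$. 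One obtains this by writing $\dirac_M$, under $M\cong G\times_K Y$ and $TM|_Y\cong TY\oplus\underline{\kg/\kk}$, as a family over $G/K$ assembled from $\dirac_Y$ and the Clifford action along $\kg/\kk$, and invoking naturality of the analytic assembly map under the induction isomorphism $K^K_\bullet(Y)\cong K^G_\bullet(G\times_K Y)$; this identification is essentially the equivalent reformulation Theorem~\ref{thm:H-M2}.

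\emph{Step 3 (conclusion).} If $\dim Y$ is odd then $\ind_K(\dirac_Y)\in K_1(C^*_rK) = \{0\}$ automatically. If $\dim Y$ is even, then $Y$ is a compact connected even-dimensional $\Spin$-manifold on which the compact connected group $K$ acts nontrivially, so Theorem~\ref{thm:A-H} gives $\ind_K(\dirac_Y) = 0$. Either way the induction formula yields $\ind_G(\dirac_M) = \DInd(0) = 0$. The principal obstacle is Step~2: matching the $\Spin$-structures on $M$, $G/K$ and $Y$, and proving the induction formula $\ind_G(\dirac_M) = \DInd(\ind_K(\dirac_Y))$ at the level of $K$-theory of $C^*$-algebras; Steps~1 and~3 are bookkeeping with the slice theorem and a direct application of Theorem~\ref{thm:A-H}.
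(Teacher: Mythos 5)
Your proposal is correct and follows essentially the same route as the paper: Abels' slice theorem to get $M\cong G\times_K N$, the stabiliser computation identifying proper nontriviality with nontriviality of the $K$-action on the slice, the two-out-of-three construction of the $\Spin$-structure on the slice from those on $M$ and $G/K$, the quantisation-commutes-with-induction identity $\ind_G(\dirac_M)=\DInd_K^G(\ind_K(\dirac_N))$, and finally Atiyah--Hirzebruch. The induction formula you flag as the principal obstacle is exactly the ingredient the paper imports from earlier work (Proposition \ref{prop quant ind}), so your outline matches the paper's proof step for step.
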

This result can be restated in an equivalent form as follows.
\begin{theorem}\label{thm:H-M2}
Consider the setting of Theorem \ref{thm:H-M}.
One has $\ind_G(\dirac_M) \not= 0$ if and only if there is a compact $\Spin$-manifold $N$ with $\hat A(N) \not=0$, and a $G$-equivariant diffeomorphism
\[
M \cong G/K \times N,
\]
where $G$ acts trivially on $N$.
\end{theorem}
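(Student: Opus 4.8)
\emph{Overall plan.} The equivalence should follow by combining the vanishing result of Theorem~\ref{thm:H-M} with the structure theory of proper actions and the Connes--Kasparov isomorphism; only the ``only if'' direction will actually use Theorem~\ref{thm:H-M}. For the \emph{``if''} direction, suppose $M\cong G/K\times N$ with $N$ a compact $\Spin$-manifold, $\hat A(N)\neq 0$, and $G$ acting trivially on $N$. Then $TM\cong p^*T(G/K)\oplus q^*TN$ as $G$-equivariant bundles (with $p,q$ the two projections and $G$ trivial on $q^*TN$), and for a product metric the $\Spin$-Dirac operator on $M$ is the graded external product of $\dirac_{G/K}$ and $\dirac_N$. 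I would invoke multiplicativity of the analytic assembly map under external products to get $\ind_G(\dirac_M)=\ind_G(\dirac_{G/K})\cdot\ind(\dirac_N)$ in $K_\bullet(C^*_rG)$, where $\ind(\dirac_N)=\hat A(N)\in\Z$ by Atiyah--Singer. Since $G/K$ has a $G$-equivariant $\Spin$-structure, $\ind_G(\dirac_{G/K})$ is the image under Dirac induction $\KInd\colon R(K)\to K_\bullet(C^*_rG)$ of a nonzero class; as $\KInd$ is an isomorphism (Connes--Kasparov), $K_\bullet(C^*_rG)\cong R(K)$ is torsion-free and $\ind_G(\dirac_{G/K})\neq 0$, so $\ind_G(\dirac_M)=\hat A(N)\cdot\ind_G(\dirac_{G/K})\neq 0$.

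\emph{The ``only if'' direction.} Assume $\ind_G(\dirac_M)\neq 0$. By the contrapositive of Theorem~\ref{thm:H-M} the action is properly trivial, i.e.\ every stabiliser is a maximal compact subgroup of $G$. By the global slice theorem for proper actions (see Section~\ref{sec slice}), there are a $K$-manifold $Y$ and a $G$-equivariant diffeomorphism $M\cong G\times_K Y$. The $G$-stabiliser of $[g,y]$ equals $gK_yg^{-1}$, where $K_y\subseteq K$ is the $K$-stabiliser of $y$; since this must be a maximal compact subgroup of $G$, and maximal compact subgroups of a connected Lie group are connected and mutually conjugate, $K_y$ is a connected closed subgroup of $K$ of full dimension, hence $K_y=K$. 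Therefore $K$ acts trivially on $Y$, so $M\cong(G/K)\times Y$ with $G$ acting only on the first factor, and cocompactness forces $Y$ compact; set $N:=Y$. From $TM\cong p^*T(G/K)\oplus q^*TN$ together with the $G$-equivariant $\Spin$-structures on $TM$ and $T(G/K)$, the two-out-of-three principle gives a $G$-equivariant $\Spin$-structure on $q^*TN$; restricting it to a slice $\{x_0\}\times N$ makes $N$ a compact $\Spin$-manifold. The product formula above then reads $\ind_G(\dirac_M)=\hat A(N)\cdot\ind_G(\dirac_{G/K})$, so $\ind_G(\dirac_M)\neq 0$ forces $\hat A(N)\neq 0$, as required.

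\emph{Anticipated main obstacle.} The delicate point is the structural step in the ``only if'' direction: extracting the honest product decomposition $M\cong G/K\times N$ from the hypotheses that all stabilisers are maximal compact and that the action is cocompact. This rests on the global slice theorem and on the rigidity of maximal compact subgroups (no proper closed subgroup of $K$ can be conjugate in $G$ to $K$). One must also check carefully that $\Spin$-structures are compatible with the splitting $TM\cong p^*T(G/K)\oplus q^*TN$ and with restriction to a slice, and that the analytic index is genuinely multiplicative for the product Dirac operator on $G/K\times N$; I expect these to be routine but not entirely automatic.
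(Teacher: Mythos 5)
Your argument is correct and follows essentially the same route as the paper: Abels' slice theorem plus the observation that maximal‑compact stabilisers force the slice action to be trivial (the paper's Lemma~\ref{lem fixed pts}), the two‑out‑of‑three construction of the $\Spin$-structure on $N$, and multiplicativity of the assembly map together with nonvanishing of $\DInd_K^G[\C]$ — the paper phrases the same computation as $\ind_G(\dirac_M)=\DInd_K^G(\ind_K(\dirac_N))$ plus injectivity of Dirac induction, and applies Atiyah--Hirzebruch directly on the slice, which is equivalent to your use of Theorem~\ref{thm:H-M} as a black box. The only point worth making explicit in the ``if'' direction is that the given $\Spin$-structure on $M$ need not a priori be the product one, but two‑out‑of‑three reduces to that case and $\hat A(N)$ is independent of the choice of $\Spin$-structure on $N$.
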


A first consequence of Theorem \ref{thm:H-M} is a rigidity-type result. This involves the \emph{Dirac induction} map 
\begin{equation} \label{eq Dirac ind}
\DInd_K^G\colon R(K) \to K_{\bullet}(C^*_rG). 
\end{equation}
Here $R(K)$ is the representation ring of $K$. This map is an isomorphism of Abelian groups by the Connes--Kasparov conjecture, which was proved for (almost) connected groups in \cite{CEN}. 
\begin{corollary} \label{cor:rigidity}
In the setting of Theorem \ref{thm:H-M}, one has
\[
\ind_G(\dirac_M) \in \Z \cdot \DInd_K^G[\C],
\]
where $[\C] \in R(K)$ is the class of the trivial representation.
\end{corollary}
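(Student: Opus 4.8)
The plan is to deduce the Corollary from Theorem~\ref{thm:H-M2} together with a product formula for the equivariant index. If $\ind_G(\dirac_M) = 0$ there is nothing to prove, since $0 \in \Z \cdot \DInd_K^G[\C]$; so assume $\ind_G(\dirac_M) \neq 0$. By Theorem~\ref{thm:H-M2} there is then a compact $\Spin$-manifold $N$ with $\hat A(N) \neq 0$ and a $G$-equivariant diffeomorphism $M \cong G/K \times N$, with $G$ acting trivially on $N$. The $\Spin$-structure of a Riemannian product is a product of $\Spin$-structures on the two factors; $G/K$ is contractible and $G$ connected, so its $G$-equivariant $\Spin$-structure is unique, while $\hat A(N)$ does not depend on the chosen $\Spin$-structure on $N$. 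Hence this diffeomorphism identifies $\dirac_M$ with the $\Spin$-Dirac operator of $G/K \times N$, and $\ind_G(\dirac_M) = \ind_G(\dirac_{G/K \times N})$.

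Next I would apply multiplicativity of the analytic assembly map under exterior products. With $G/K$ a proper cocompact $G$-manifold and $N$ compact with trivial $G$-action, the graded splitting $\dirac_{G/K \times N} = \dirac_{G/K}\,\widehat{\otimes}\,1 + 1\,\widehat{\otimes}\,\dirac_N$ should yield
\[
\ind_G(\dirac_{G/K \times N}) = \ind_G(\dirac_{G/K}) \cdot \ind(\dirac_N) \quad \in K_{\bullet}(C^*_rG),
\]
the product being the $K_{\bullet}(\C)$-module multiplication on $K_{\bullet}(C^*_rG)$, i.e.\ multiplication by the integer $\ind(\dirac_N) = \hat A(N)$. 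In $KK$-theoretic terms this says that the index class of the product operator is the Kasparov product of the index classes of the two factors; it follows from associativity of the Kasparov product and its compatibility with exterior tensor products, or one may invoke an existing product formula for the Baum--Connes assembly map.

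Finally, unwinding the definition of the Dirac induction map \eqref{eq Dirac ind} identifies $\ind_G(\dirac_{G/K})$ with $\DInd_K^G[\C]$: by construction $\DInd_K^G$ sends a class $V \in R(K)$ to the equivariant index of $\dirac_{G/K}$ twisted by the homogeneous bundle $G \times_K V$, and for the trivial representation $V = \C$ this twisting bundle is trivial. Combining the three steps,
\[
\ind_G(\dirac_M) = \hat A(N) \cdot \DInd_K^G[\C] \;\in\; \Z \cdot \DInd_K^G[\C].
\]
I expect the real content to lie in the middle step --- making precise the $KK$-theoretic factorisation of the index class on a product manifold only one of whose factors carries the (proper, cocompact) group action. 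The outer two steps are essentially bookkeeping, the only mild subtlety being the matching of $\Spin$-structures under the diffeomorphism provided by Theorem~\ref{thm:H-M2}.
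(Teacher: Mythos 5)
Your proposal is correct and follows essentially the same route as the paper: reduce to the nontrivial case, invoke Theorem~\ref{thm:H-M2} to write $M \cong G/K \times N$, apply multiplicativity of the assembly map (the paper cites Theorem 5.2 of \cite{HochsDS} for exactly this product formula), and identify $\ind_G(\dirac_{G/K})$ with $\DInd_K^G[\C]$ to get $\ind_G(\dirac_M) = \hat A(N)\cdot \DInd_K^G[\C]$. The middle step you flag as the ``real content'' is indeed the one the paper outsources to an existing reference rather than reproving.
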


Theorems \ref{thm:H-M} and \ref{thm:H-M2} and Corollary \ref{cor:rigidity}  have consequences within differential geometry, not involving $K$-theory and $K$-homology. Let $c\in C^{\infty}_c(M)$ be a cutoff function, that is to say a non-negative function satisfying 
\[
\int_Gc(g^{-1}m)\mathrm{d}g=1
\]
 for all $m\in M$, for a fixed left Haar measure $\mathrm{d}g$ on $G$. The \emph{averaged $\hat A$-genus} of the action by $G$ on $M$ is
 \[
 \Aav(M) := \int_M c\hat A_M.
 \]
It was shown to be independent of $c$ in \cite{Wang}. 

Since $\hat A_M$ is an even cohomology class, the averaged $\hat A$-genus of an odd-dimensional manifold is zero. For even-dimensional $\Spin$-manifolds, we obtain the following results.
\begin{corollary} \label{cor:Ahat}
Consider the setting of Theorem \ref{thm:H-M}, and suppose that $M$ is even-dimensional.
\begin{enumerate}
\item If the action is properly nontrivial, then $\Aav(M) = 0$.
\item If $\Aav(M) \not= 0$, then there is a compact $\Spin$-manifold $N$ with $\hat A(N) \not=0$, and a $G$-equivariant diffeomorphism
\[
M \cong G/K \times N,
\]
where $G$ acts trivially on $N$.
\item The averaged $\hat A$-genus $\Aav(M)$ is an integer multiple of $\Aav(G/K)$.
\end{enumerate}
\end{corollary}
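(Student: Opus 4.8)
The plan is to route everything through the canonical trace on $C^*_rG$ and the $L^2$-index theorem for proper cocompact actions, after which each of the three assertions follows from a result already proved above. Recall that $C^*_rG$ carries the densely defined, lower semicontinuous trace $\tau$ determined by $\tau(f)=f(e)$ for $f\in C_c(G)$, inducing a homomorphism $\tau_*\colon K_0(C^*_rG)\to\R$. Since $M$ is even-dimensional, $\ind_G(\dirac_M)\in K_0(C^*_rG)$, and the one geometric input I would use is the identity
\[
\tau_*\bigl(\ind_G(\dirac_M)\bigr)=\int_M c\,\hat A_M=\Aav(M),
\]
valid for any proper cocompact $G$-$\Spin$-manifold. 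This is the proper-cocompact analogue of Atiyah's $L^2$-index theorem; it is essentially the content of \cite{Wang} (which also gives independence of the right-hand side of $c$), so I would either cite it directly or, for a self-contained treatment, obtain it from a Connes--Moscovici-type heat-kernel evaluation of $\tau$ on the idempotent representing $\ind_G(\dirac_M)$.

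Granting this, part (1) is immediate: if the action is properly nontrivial, Theorem \ref{thm:H-M} gives $\ind_G(\dirac_M)=0$, hence $\Aav(M)=\tau_*(0)=0$. Part (2) is the contrapositive together with the structural consequence: if $\Aav(M)\neq 0$ then $\ind_G(\dirac_M)\neq 0$, so Theorem \ref{thm:H-M2} produces a compact $\Spin$-manifold $N$ with $\hat A(N)\neq 0$ and a $G$-equivariant diffeomorphism $M\cong G/K\times N$ with $G$ acting trivially on $N$.

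For part (3), I would invoke Corollary \ref{cor:rigidity} to write $\ind_G(\dirac_M)=n\cdot\DInd_K^G[\C]$ for some $n\in\Z$. By construction of the Dirac induction map, $\DInd_K^G[\C]$ is the equivariant index $\ind_G(\dirac_{G/K})$ of the untwisted $\Spin$-Dirac operator on $G/K$ for the (proper, cocompact) left-translation action, so applying $\tau_*$ and the index identity above to both $M$ and $G/K$ yields
\[
\Aav(M)=\tau_*\bigl(\ind_G(\dirac_M)\bigr)=n\cdot\tau_*\bigl(\DInd_K^G[\C]\bigr)=n\cdot\tau_*\bigl(\ind_G(\dirac_{G/K})\bigr)=n\cdot\Aav(G/K),
\]
which is the claim. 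Alternatively, once part (2) is in hand one can avoid the trace on $G/K$ entirely: for $\Aav(M)\neq 0$, write $M\cong G/K\times N$ as above, choose a product cutoff $c=c_0\otimes 1$ on $G/K\times N$, and use Fubini together with $\hat A_M=\hat A_{G/K}\wedge\hat A_N$ to get $\Aav(M)=\Aav(G/K)\cdot\hat A(N)$ with $\hat A(N)\in\Z$ by Atiyah--Singer; the case $\Aav(M)=0$ is trivial.

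The main obstacle is the single bridge $\tau_*(\ind_G(\dirac_M))=\Aav(M)$ between the $K$-theory of $C^*_rG$ and de Rham cohomology; once it is available the corollary is formal. Since this is by now a known $L^2$-index statement for proper cocompact actions, the real work is in citing or transcribing it correctly, together with two routine checks: that $\tau_*$ is genuinely defined on the index classes in question, and that $\DInd_K^G[\C]$ really is the index of the (ungraded) $\Spin$-Dirac operator on $G/K$, which follows directly from the definition of Dirac induction but should be recorded.
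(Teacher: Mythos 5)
Your proposal is correct and follows essentially the same route as the paper: both hinge on Wang's identity $\tau_*(\ind_G(\dirac_M))=\Aav(M)$ for the von Neumann trace, then deduce (1) from Theorem \ref{thm:H-M}, (2) from Theorem \ref{thm:H-M2}, and (3) from Corollary \ref{cor:rigidity} together with $\tau_*(\DInd_K^G[\C])=\Aav(G/K)$. The only differences are presentational (your description of $\tau$ via $f\mapsto f(e)$ versus the paper's $\tau(R(f)^*R(f))=\int_G|f|^2$, and your optional Fubini argument for (3)).
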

The first two points in this corollary provide a criterion, namely nonvanishing of the number $\hat A_G(M)$, for the action to be of a particularly simple type. In the third point, the number $\Aav(G/K)$ only depends on the group. Therefore, that point is a divisibility property of the topological invariant $\Aav(M)$ of the action.

\begin{remark}
There are many group actions that satisfy the hypotheses of Theorem \ref{thm:H-M}. Indeed, 
if $K$ acts on a compact $\Spin$-manifold $N$ as in Theorem \ref{thm:A-H}, then Theorem \ref{thm:H-M} applies to the action  by $G$ on the fibred product $G\times_K N$, as we will see.
If $K = S^1$, then it is proved in the theorem in Section 2.3 in \cite{AH} 
that any compact oriented manifold $X$ with $\displaystyle \hat A(X)=0$ has the property that the disjoint union of $r$ copies of $X$, 
$r X$ (for some $r \in \N$) is oriented cobordant to a compact Spin manifold $N$ which has a non-trivial $S^1$-action on each of its components. Then the action by $K$ on $N$ satisfies the hypotheses of Theorem \ref{thm:A-H}, so that the action  of $G$ on $G\times_K N$ satisfies the conditions of  Theorem \ref{thm:H-M}. 
\end{remark}

\begin{remark}
In the setting of Theorems \ref{thm:H-M} and \ref{thm:H-M2}, there is a proper, equivarant map $p\colon M \to G/K$ (see Theorem \ref{thm:abels} below). The map $p_*$ induced on $K$-homology relates the equivariant indices on $M$ and $G/K$ by the diagram
\[
\xymatrix{
K^G_\bullet(M) \ar[r]^-{\ind_G} \ar[d]_-{p_*} & K_{\bullet}(C^*_rG). \\
K^G_\bullet(G/K) \ar[ur]_-{\ind_G}^{\cong} &
}
\]
Since the Baum--Connes conjecture is true for connected groups by
 Theorem 1.1 in  \cite{CEN}, the equivariant index on $G/K$ defines an isomorphism $$K^G_\bullet(G/K) \cong  K_{\bullet}(C^*_rG).$$ Hence $p_*[\dirac_M] = 0$ if and only if $\ind_G(\dirac_M) = 0$, so the non-vanishing of $\ind_G(\dirac_M)$ in Theorems \ref{thm:H-M} and \ref{thm:H-M2}, can be replaced by the non-vanishing of the class $p_*[\dirac_M]$. This removes $K$-theory and the assembly map from these results.
\end{remark}

\section{$\Spin$-structures on slices} \label{sec slice}

We begin by recalling the smooth version of Abels' slice theorem for proper group actions. Let $M$ be a smooth manifold, and let $G$ be a connected Lie group acting properly on $M$. Let $K<G$ be maximal compact.
\begin{theorem}[p.\ 2 of \cite{Abels}]\label{thm:abels}
There is a smooth, $K$-invariant submanifold $N \subset M$, such that 
the map $[g, n] \mapsto gn$ is a $G$-equivariant diffeomorphism
\begin{equation} \label{eq Abels fibr}
G\times_K N \cong M.  
\end{equation}
Here the left hand side is the quotient of $G\times N$ by the action by $K$ given by
\[
k\cdot (g, n) = (gk^{-1}, kn),
\]
for $k \in K$, $g \in G$ and $n \in N$. 
\end{theorem}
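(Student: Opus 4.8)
The plan is to reduce the statement to the construction of a smooth $G$-equivariant submersion $p\colon M\to G/K$; given such a $p$, the submanifold $N:=p^{-1}(eK)$ will be the desired slice.

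To build $p$ I would proceed in three stages. First, a \emph{continuous} equivariant map: since $G$ is connected, the Cartan--Iwasawa--Malcev theorem gives a diffeomorphism $G/K\cong\R^{\dim G-\dim K}$, so $G/K$ is contractible; more precisely it is a model for the classifying space $\underline EG$ of proper $G$-actions, since the stabiliser of $gK$ is the maximal compact subgroup $gKg^{-1}$ and the fixed-point set of every compact subgroup is non-empty and contractible. As $M$ is a proper smooth $G$-manifold, it carries a $G$-CW structure, so the universal property of $\underline EG$ supplies a continuous $G$-map $M\to G/K$, unique up to $G$-homotopy. (Alternatively, one constructs this map by an obstruction argument, patching the local equivariant maps $G\times_{G_m}S_m\to G/K$, $[g,s]\mapsto gg_m^{-1}K$ — with $S_m$ a slice at $m$ and $g_m G_m g_m^{-1}\subset K$ — the obstructions to a global choice vanishing by the contractibility of $G/K$.) Second, \emph{smoothing}: approximate by a smooth $G$-map via an invariant smoothing argument adapted to slices. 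Third, making it a \emph{submersion}: perturb, within its $G$-homotopy class, to be transverse to the point $\{eK\}$. This is achievable by equivariant transversality: for $n\in p^{-1}(eK)$ one has $G_n=\Stab_G(n)\subset\Stab_G(eK)=K$, so with an $\Ad(K)$-invariant splitting $\kg=\kk\oplus\kp$ we get $T_n(Gn)\cong\kg/\kg_n\cong(\kk/\kg_n)\oplus\kp$ as $G_n$-representations, whence $T_nM$ surjects $G_n$-equivariantly onto $T_{eK}(G/K)\cong\kp$. Transversality of $p$ to $\{eK\}$ says that $dp$ is onto along $p^{-1}(eK)$, and then equivariance together with $G\cdot p^{-1}(eK)=M$ forces $p$ to be a submersion everywhere.

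With $p$ a smooth equivariant submersion, set $N:=p^{-1}(eK)$. Then $N$ is a smooth submanifold ($eK$ is a regular value) and is $K$-invariant ($K=\Stab_G(eK)$ and $p$ is equivariant). The map $\Phi\colon G\times_K N\to M$, $[g,n]\mapsto gn$, is well defined by the very definition of the $K$-action on $G\times N$; it is surjective because $p(m)=gK$ gives $g^{-1}m\in N$; it is injective because $g_1n_1=g_2n_2$ implies $g_1K=p(g_1n_1)=p(g_2n_2)=g_2K$, so $g_1^{-1}g_2\in K$ and $[g_1,n_1]=[g_2,n_2]$; and it is a local diffeomorphism, since $\dim(G\times_K N)=(\dim G-\dim K)+(\dim M-\dim G/K)=\dim M$ and a short computation shows $d\Phi$ is injective (alternatively, $\Phi$ is a proper injective immersion). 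Hence $\Phi$ is the asserted $G$-equivariant diffeomorphism $G\times_K N\cong M$.

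The main obstacle is the first stage: producing a \emph{single, globally defined} $G$-equivariant map $M\to G/K$. This is where properness of the action is genuinely used — informally, it amounts to conjugating all the (compact) stabilisers into the fixed subgroup $K$ coherently over the entire orbit space — and it is precisely the contractibility of $G/K$ (equivalently, that $G/K$ is a model for $\underline EG$) that makes the resulting obstructions vanish. Once this equivariant map is in hand, the smoothing, the transversality perturbation, and the verification that $\Phi$ is a diffeomorphism are comparatively routine.
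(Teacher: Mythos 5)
The paper does not prove this statement at all: it is quoted directly from p.\ 2 of Abels' paper, so there is no internal argument to compare against. Your sketch follows the standard route to Abels' theorem (equivalently, the existence of a global $K$-slice): produce a smooth $G$-equivariant map $p\colon M\to G/K$ and take $N=p^{-1}(eK)$. The outline is correct, and your verification that $\Phi\colon G\times_K N\to M$ is a well-defined equivariant bijective local diffeomorphism is fine. Two remarks. First, your third stage is unnecessary: \emph{any} smooth $G$-equivariant map $p\colon M\to G/K$ is automatically a submersion, since for $m\in M$ the composite of the orbit map $h\mapsto hm$ with $p$ is the orbit map $h\mapsto h\,p(m)$ of the transitive action on $G/K$, whose differential $\kg\to T_{p(m)}(G/K)$ is already onto; so no perturbation or equivariant transversality is needed (which is just as well, since equivariant transversality is obstructed in general). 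Second, the genuine content is concentrated in your first two stages, and there it is worth being honest that you are outsourcing work of essentially the same depth as the theorem itself: that $G/K$ is a model for $\underline{E}G$ requires not just contractibility of $G/K$ but contractibility and nonemptiness of $(G/K)^H$ for every compact $H<G$ (Cartan--Iwasawa--Malcev plus a contractibility theorem); the existence of a $G$-CW structure on a proper smooth $G$-manifold is Illman's theorem; and equivariant smoothing of a continuous $G$-map into $G/K$ for a proper action needs either a $G$-invariant averaging structure on $G/K$ or a patching argument over local slices. These are all true and citable, so your argument closes, but as a self-contained proof it is a scaffold resting on several load-bearing external results rather than an elementary derivation.
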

We call \eqref{eq Abels fibr} an associated Abels fibration of $M$, as it is a fibre bundle over $G/K$ with fibre $N$. From now on, fix $N$ as in Theorem \ref{thm:abels}.

The fixed point set $N^K$ of the action by $K$ on $N$ is related to the action by $G$ on $M$ in the following way.
\begin{lemma} \label{lem fixed pts}
One has
\[
M_{(K)} = G \cdot N^K \cong G/K \times N^K,
\]
where $M_{(K)}$ is the set of points in $M$ with stabilisers conjugate to $K$.
\end{lemma}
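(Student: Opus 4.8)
The plan is to transport everything to the Abels fibration $G \times_K N \cong M$ of Theorem~\ref{thm:abels}, under which $N \subseteq M$ is identified with $\{[e,n] : n \in N\}$, and to pin down $M_{(K)}$ by a direct stabiliser computation. So the first step is to compute, for $g \in G$ and $n \in N$, the stabiliser in $G$ of the point $[g,n] = gn$. Unwinding the definition of the $K$-action $k\cdot(g,n) = (gk^{-1}, kn)$, one finds that $g' \cdot [g,n] = [g,n]$ exactly when $g' = g k^{-1} g^{-1}$ for some $k$ in $K_n := \{k \in K : kn = n\}$; hence the stabiliser of $[g,n]$ is $g K_n g^{-1}$. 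In particular every stabiliser of the $G$-action on $M$ is conjugate in $G$ to a (compact) subgroup of $K$.

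The second step, which is the heart of the matter, is to show that $g K_n g^{-1}$ is conjugate to $K$ in $G$ if and only if $n \in N^K$. The ``if'' direction is trivial, since $n \in N^K$ means $K_n = K$. For the converse, suppose $K_n$ is conjugate to $K$; then $K_n$ is itself a maximal compact subgroup of $G$, because conjugation by an element of $G$ is an automorphism and therefore preserves the property of being a maximal compact subgroup. But $K_n \subseteq K$ and $K$ is compact, so maximality of $K_n$ forces $K_n = K$, i.e.\ $n \in N^K$. I expect this step --- short though it is --- to be the one needing the most care; the only genuine input is that maximal compactness is conjugation-invariant and that a maximal compact subgroup cannot sit properly inside a compact subgroup. (Alternatively one can argue by comparing dimensions and component groups of $K_n$ and $K$, using that $K$ has finitely many connected components.)

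Combining the two steps, the preimage of $M_{(K)}$ under the diffeomorphism $G \times_K N \cong M$ is precisely $\{[g,n] : g \in G,\ n \in N^K\}$, which is the subbundle $G \times_K N^K$ of $G\times_K N$; transporting back gives $M_{(K)} = G \cdot N^K$. Finally, since $K$ fixes every point of $N^K$, the $K$-action on $G \times N^K$ is $k\cdot(g,n) = (gk^{-1}, n)$ and only moves the first coordinate, so $G \times_K N^K = (G/K) \times N^K$ as $G$-spaces, with $G$ acting by left translation on $G/K$ and trivially on $N^K$. Here $N^K$ is an embedded submanifold of $N$, by the standard fact that the fixed-point set of a smooth action by a compact group is a submanifold, so all the spaces in sight are manifolds and the identifications are diffeomorphisms. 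This gives the chain $M_{(K)} = G\cdot N^K \cong G/K \times N^K$ asserted in Lemma~\ref{lem fixed pts}.
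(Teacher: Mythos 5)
Your proposal is correct and follows essentially the same route as the paper: identify the stabiliser of $[g,n]$ as $gK_ng^{-1}$ via the Abels fibration, observe that a subgroup of $K$ conjugate to $K$ must equal $K$, and conclude $M_{(K)} = G\cdot N^K \cong G/K\times N^K$. Your version just spells out the maximal-compactness argument and the final bundle identification in more detail than the paper's terse proof.
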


\begin{proof}
Let $m \in M_{(K)}$, and write $m = [g, n]$ for $g \in G$ and $n \in N$, under the correspondence \eqref{eq Abels fibr}. Then 
the stabiliser subgroups $G_m$ of $G$ and $K_n$ of $K$ satisfy $G_m = gK_n g^{-1}$. So $G_m$ is conjugate to $K$ if and only if $K_n$ is. Since $K_n < K$, it is conjugate to $K$ precisely if it equals $K$. 
\end{proof}

Now fix a $K$-invariant inner product on the Lie algebra $\kg$ of $G$, and let $\kp \subset \kg$ be the orthogonal complement to the Lie algebra $\kk$ of $K$. Suppose $\Ad: K \to \SO(\kp)$ lifts to
\begin{equation} \label{eq tilde Ad}
 \widetilde{\Ad}: K \to \Spin(\kp).
 \end{equation}
  This is always possible if one replaces $G$ by a double cover. Indeed, consider the diagram
\[
\xymatrix{
\widetilde{K} \ar[r]^-{\widetilde{\Ad}} \ar[d]_{\pi_K}& \Spin(\kp) \ar[d]_{\pi}^{2:1}\\
K \ar[r]^-{\Ad} & \SO(\kp),
}
\]
where
\[
\widetilde{K} := \{ (k, a) \in K\times \Spin(\kp); \Ad(k) = \pi(a)\},
\]
and the maps $\pi_K$ and $\widetilde{\Ad}$ are defined by
\[
\begin{split}
\pi_K(k, a)&:= k; \\
\widetilde{\Ad}(k, a) &:= a,
\end{split}
\]
for $k \in K$ and $a \in \Spin(\kp)$. Then for all $k \in K$,
\[
\pi_K^{-1}(k) \cong \pi^{-1}(\Ad(k)) \cong \Z_2,
\]
so $\pi_K$ is a double covering map. Since $G/K$ is contractible, $\widetilde{K}$ is the maximal compact subgroup of a double cover of $G$.

Suppose $M$ has a $G$-equivariant $\Spin$-structure $P_M \to M$.
In Section 3.2 of \cite{HochsDS}, Section 3.2 of \cite{HM} and also \cite{HMAIM}, an induction procedure of equivariant $\Spinc$-structures from $N$ to $M$ is described, which we will denote by $\Ind_N^M$ here. We will use the fact that any $G$-equivariant $\Spin$-structure on $M$ can be obtained via this induction procedure. (See also Proposition 3.10 in \cite{HM}.)
\begin{lemma} \label{lem Spin slice}
Suppose that $G/K$ and $M$ have $G$-equivariant $\Spin$-structures. 
Then
there is a $K$-equivariant $\Spin$-structure $P_N \to N$ such that
$
 \Ind_N^M(P_N)
$
is the original $\Spin$-structure on $M$.
\end{lemma}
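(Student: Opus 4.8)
The plan is to take the fact, recalled just before the lemma, that the given $G$-equivariant $\Spin$-structure $P_M$ on $M$ equals $\Ind_N^M(P_N)$ for some $K$-equivariant $\Spinc$-structure $P_N$ on $N$, and then to show that $P_N$ can be chosen to be an honest $\Spin$-structure. Fixing a $\Spinc$-structure $P_N$ with $\Ind_N^M(P_N)=P_M$, the place where the hypothesis on $G/K$ enters is that it supplies the lift $\widetilde{\Ad}\colon K\to\Spin(\kp)$ of \eqref{eq tilde Ad}, so that the induction construction starts from a genuine $\Spin$-structure in the $\kp$-direction. This forces the determinant line bundle of $P_N$ to be trivial, which in turn allows $P_N$ to be reduced to a $\Spin$-structure.

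First I would record how $\Ind_N^M$ acts on determinant line bundles. Under the Abels diffeomorphism $G\times_K N\cong M$ of Theorem \ref{thm:abels}, the tangent bundle of $M$ restricts over the slice $N$ to a $K$-equivariant splitting $TM|_N\cong \underline{\kp}\oplus TN$, where $\underline{\kp}:=N\times\kp$ carries the $K$-action through $\Ad$; the lift $\widetilde{\Ad}$ makes $\underline{\kp}$ a $K$-equivariantly $\Spin$ bundle. The induction procedure forms the $K$-equivariant $\Spinc$-structure on $\underline{\kp}\oplus TN$ built from this $\Spin$-structure and from $P_N$, and then pushes it forward along $G\times_K(-)$. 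Since the $\underline{\kp}$-factor contributes trivially to the determinant, the determinant line bundle of $\Ind_N^M(P_N)$ is $G\times_K\det(P_N)$. Restricting over the slice and using the canonical $K$-equivariant identification $(G\times_K E)|_N\cong E$ for $K$-equivariant bundles $E\to N$, I obtain a $K$-equivariant isomorphism $\det(P_N)\cong \det(P_M)|_N$.

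Because $P_M$ is a genuine $G$-equivariant $\Spin$-structure, $\det(P_M)$ is canonically $G$-equivariantly trivial; hence $\det(P_N)$ is $K$-equivariantly trivial. Now $\Spin(\dim N)$ is exactly the kernel of the homomorphism $\Spinc(\dim N)\to\U(1)$ defining the determinant, so a nowhere-vanishing $K$-invariant section of $\det(P_N)$ cuts out a $K$-equivariant principal $\Spin(\dim N)$-subbundle $P_N'\subseteq P_N$, i.e.\ a $K$-equivariant $\Spin$-structure refining the $\Spinc$-structure $P_N$. I would take this section to be the one that corresponds, under the isomorphism $\det(P_N)\cong\det(P_M)|_N$ above, to the canonical trivialising section of $\det(P_M)|_N$.

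It then remains to check that $\Ind_N^M(P_N')=P_M$, now as $\Spin$-structures. By naturality of the construction, $\Ind_N^M(P_N')$ is the $\Spin(\dim M)$-subbundle of $\Ind_N^M(P_N)$ — the $\Spinc$-structure underlying $P_M$ — determined by the section of $\det(P_M)=G\times_K\det(P_N)$ induced from the chosen section of $\det(P_N)$; since a $G$-equivariant section of $G\times_K\det(P_N)$ is determined by its restriction to the slice, this induced section is the canonical trivialisation of $\det(P_M)$, whose associated $\Spin$-reduction is $P_M$ itself. The routine ingredients — the explicit form of $\Ind_N^M$ on frame and determinant bundles, and the compatibility of $G\times_K(-)$ with restriction to the slice and with sections — I would take from \cite{HochsDS, HM, HMAIM}. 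I expect the only real content beyond bookkeeping to be the determinant-line step: one must know that $K$-equivariant triviality of $\det(P_N)$ really does suffice for a $K$-equivariant reduction of structure group, and one must pin down the trivialisation precisely enough that the resulting $\Spin$-structure is $P_M$ itself rather than one differing from it by an equivariant degree-one $\Z_2$-cohomology class.
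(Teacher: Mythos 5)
Your argument is correct in outline, but it takes a genuinely different route from the paper. The paper works entirely in the $\Spin$-category and constructs $P_N$ directly: it restricts $P_M$ to the slice, uses $TM|_N = TN \oplus (N\times\kp)$ together with the $\Spin$-structure $N\times\Spin(\kp)$ coming from $\widetilde{\Ad}$, and divides out the $\kp$-direction by the two-out-of-three lemma at the level of spinor bundles ($\cS_M|_N = \cS_N\otimes\Delta_{\kp}$, with an explicit formula for the Clifford action); it then cites Lemma 3.9 of \cite{HM} for $\Ind_N^M(P_N)=P_M$. You instead start from the $\Spinc$-level surjectivity of $\Ind_N^M$ (Proposition 3.10 of \cite{HM}), compute that the determinant line bundle of the induced structure is $G\times_K\det(P_N)$, deduce $K$-equivariant triviality of $\det(P_N)$ from triviality of $\det(P_M)$, and reduce along a carefully chosen trivialising section. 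Both proofs outsource the same kind of compatibility statements to \cite{HochsDS,HM}; what the paper's construction buys is that the $\Z_2$-ambiguity you rightly worry about at the end (two $\Spin$-reductions of the same $\Spinc$-structure can differ by a class in $H^1(N;\Z_2)$) never arises, because the two-out-of-three lemma produces the $\Spin$-structure on $TN$ canonically from $P_M|_N$ and the $\kp$-direction $\Spin$-structure. Your resolution of that ambiguity — transporting the canonical trivialisation of $\det(P_M)|_N$ to $\det(P_N)$ and noting that a $G$-equivariant section of $G\times_K\det(P_N)$ is determined by its restriction to the slice — is sound, but it is the one step where your sketch genuinely has to do work that the paper's construction avoids, and it rests on an unproved (though plausible, given the construction in Section 3.2 of \cite{HochsDS}) naturality of $\Ind_N^M$ with respect to determinant line bundles and reductions of structure group.
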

\begin{proof}
Let $\kp_N \to N$ be  the trivial vector bundle $N \times \kp \to N$, with the diagonal $K$-action. 
Since $G/K$ has an equivariant $\Spin$-structure, the bundle $\kp_N$ has the 
$K$-equivariant $\Spin$-structure
\[
N\times \Spin(\kp) \to N
\]
 where $K$ acts diagonally on $N\times \Spin(\kp)$ via the lift \eqref{eq tilde Ad} of the adjoint action by $K$ on $\kp$.
Since
\[
TM  = G\times_K(TN \oplus \kp_N)
\]
(see Proposition 2.1 and Lemma 2.2 in \cite{HochsDS}), the restriction
$P_M|_N$ is a $K$-equivariant $\Spin$-structure on 
\[
TM|_N = TN \oplus \kp_N.
\] 

Togther with the $\Spin$-structure on $\kp_N$, this determines a $K$-equivariant $\Spin$-structure on $TN$ by the two-out-of-three lemma. Explicitly, if $\cS_M$ is the spinor bundle associated to $P_M$, and $\Delta_{\kp}$ is the standard representation of $\Spin(\kp)$, then the spinor bundle $\cS_N$ associated to $P_N$ is determined by
\[
\cS_M|_N = \cS_N \otimes \Delta_{\kp}.
\]
The Clifford action $c_{\cS_N}$ by $TN$ on $\cS_N$ is determined by the property that for all $n \in N$, $v \in T_nN$ and $X \in \kp$,
\[
c_{\cS_M|_N}(v, X) = \left\{ \begin{array}{ll} 
c_{\cS_N}(v) \otimes 1_{\Delta_{\kp}} + \varepsilon_{\cS_N} \otimes c_{\kp}(X) & \text{if $\dim G/K$ is even;} \\
c_{\cS_N}(v) \otimes 1_{\Delta_{\kp}} + \varepsilon_{\cS_N} \otimes \varepsilon_{\Delta_{\kp}} c_{\kp}(X) & \text{if $\dim G/K$ is odd.}
\end{array}\right.
\]
Here $\varepsilon_{\cS_N}$ and $\varepsilon_{\Delta_{\kp}}$ are the grading operators on $\cS_N$ and $\Delta_{\kp}$, respectively. See also Section 3.1 in \cite{Plymen}.

In Lemma 3.9 of \cite{HM}, it is shown that 
\[
P_M = \Ind_N^M(P_N).
\]
\end{proof}

\begin{remark}
We have only considered the principal bundle part $P_M \to M$ of a $\Spin$-structure on $M$, not the isomorphism
\[
P_M \times_{\Spin(\dim M)} \R^{\dim M} \cong TM.
\] 
This isomorphism determines the Riemannian metric on $M$ induced by the $\Spin$-structure. The index of $\dirac_M$ is independent of this metric, however, so Lemma \ref{lem Spin slice} is enough for our purposes.
\end{remark}

\section{Proofs of the results}

The \emph{quantisation commutes with induction} techniques of \cite{HochsDS, HochsPS},  adapted to the $\Spin$-setting, allow us to deduce Theorems \ref{thm:H-M} and \ref{thm:H-M2}
from Atiyah and Hirzebruch's Theorem \ref{thm:A-H}. 
This is based on the fact that the Dirac induction map \eqref{eq Dirac ind} relates the equivariant indices of the $\Spin$-Dirac operators $\dirac_N$ on $N$ and $\dirac_M$ on $M$, associated to the $\Spin$-structures $P_N$ and $P_M$, respectively, to each other. (See also Theorem 5.7 in \cite{HM}.)
\begin{proposition} \label{prop quant ind}
If $G/K$ has a $G$-equivariant $\Spin$-structure, then
\[
\DInd_K^G\bigl( \ind_K(\dirac_N)\bigr) = \ind_G(\dirac_M) \quad \in K_{\bullet}(C^*_{r}G).
\]
\end{proposition}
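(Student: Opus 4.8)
The plan is to realize both equivariant indices as Kasparov products and then use the compatibility of the analytic assembly map with the Abels fibration $M \cong G\times_K N$ from Theorem \ref{thm:abels}. The key structural input is Lemma \ref{lem Spin slice}, which tells us that the $\Spin$-structure $P_M$ on $M$ is exactly $\Ind_N^M(P_N)$ for a $K$-equivariant $\Spin$-structure $P_N$ on $N$, together with the splitting $\cS_M|_N = \cS_N \otimes \Delta_{\kp}$ of spinor bundles. So the $\Spin$-Dirac operator $\dirac_M$ is, fibrewise over $G/K$, built from $\dirac_N$ twisted by the spinor bundle of the vertical tangent bundle; this is precisely the geometric situation in which Dirac induction interpolates between indices.

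I would carry this out in the following steps. First, recall the definition of $\DInd_K^G\colon R(K) = K^G_\bullet(\mathrm{pt}) \to K_\bullet(C^*_r G)$ as the composite of the Dirac induction class $[\dirac_{G/K}^{\Delta_\kp}] \in KK^G(\C, C_0(G/K))$ (the Clifford-linear Dirac operator on $G/K$ associated to $\widetilde{\Ad}$) with the assembly map for $G/K$; concretely, $\DInd_K^G(V) = \ind_G(\dirac_{G/K} \otimes E_V)$ where $E_V = G\times_K V$. Second, identify $[\dirac_M]\in K^G_\bullet(M)$ with the image of $[\dirac_N] \in K^K_\bullet(N)$ under the induction isomorphism $K^K_\bullet(N) \cong K^G_\bullet(G\times_K N) = K^G_\bullet(M)$; this is where the spinor-bundle decomposition and the explicit Clifford action formulas in Lemma \ref{lem Spin slice} are used — one checks that the induced operator is the $\Spin$-Dirac operator of $\Ind_N^M(P_N) = P_M$, matching the two cases according to the parity of $\dim G/K$. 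Third, invoke the naturality of the analytic assembly map under this induction (the "quantisation commutes with induction" principle of \cite{HochsDS, HochsPS}, adapted to the $\Spin$-setting as in Theorem 5.7 of \cite{HM}): the assembly map $\ind_G$ on $K^G_\bullet(M)$, precomposed with induction $K^K_\bullet(N) \to K^G_\bullet(M)$, equals $\DInd_K^G$ precomposed with the assembly map $\ind_K$ on $K^K_\bullet(N)$. Applying this to the class $[\dirac_N]$ gives $\ind_G(\dirac_M) = \DInd_K^G(\ind_K(\dirac_N))$, which is the assertion.

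The main obstacle is the second step: verifying that the operator obtained by inducing $\dirac_N$ along $G\times_K(-)$ is genuinely the $\Spin$-Dirac operator associated to the induced $\Spin$-structure $P_M$, and not merely a $K$-homologous operator differing by lower-order terms or a twist by the bundle $\kp_N$. This requires matching the Clifford module structures — using $\cS_M|_N = \cS_N\otimes\Delta_\kp$ and the parity-dependent Clifford action formulas from Lemma \ref{lem Spin slice} — and controlling the contribution of the $\kp_N$-directions, which become the (flat, Clifford-linear) $G/K$-directions after induction. Fortunately, this is exactly what is established in \cite{HM} (Lemma 3.9 and Theorem 5.7 there), so the argument reduces to citing and assembling those results; the remaining points (definition of $\DInd_K^G$, functoriality of assembly under induction) are standard and appear in \cite{HochsDS, HochsPS}.
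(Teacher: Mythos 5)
Your proposal is correct and follows essentially the same route as the paper: the paper's proof consists precisely of your second and third steps, namely citing the commuting diagram $\ind_G \circ \KInd_K^G = \DInd_K^G \circ \ind_K$ from \cite{HochsDS, HochsPS} and then verifying $\KInd_K^G[\dirac_N] = [\dirac_M]$ via the induced $\Spin$-structure of Lemma \ref{lem Spin slice} (with the observation that the determinant line bundles and their connections are trivial in the $\Spin$-setting). You also correctly identified the second step as the substantive point requiring the Clifford-module matching.
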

\begin{proof}
Let $K_{\bullet}^K(N)$ and $K_{\bullet}^G(M)$ be the equivariant $K$-homology groups \cite{BCH} of $N$ and $M$, respectively.
In Theorem 4.6 in \cite{HochsDS} and Theorem 4.5 in \cite{HochsPS}, a map
\[
\KInd_K^G\colon K_{\bullet}^K(N) \to K_{\bullet}^G(M)
\]
is constructed, such that the following diagram commutes:
\[
\xymatrix{
K_{\bullet}^G(M) \ar[r]^-{ \ind_G} & K_{\bullet}(C^*_{\red}G) \\
K_{\bullet}^K(N) \ar[u]^-{\KInd_K^G} \ar[r]^-{\ind_K} & R(K) \ar[u]_-{\DInd_K^G}.
}
\]
In Section 6 of \cite{HochsDS}, it is shown that the $K$-homology class of a $\Spinc$-Dirac operator on $N$, associated to a connection $\nabla^N$ on the determinant line bundle of a $\Spinc$-structure, is mapped to the class  of a $\Spinc$-Dirac operator on $M$ associated to a connection $\nabla^M$ induced by $\nabla^N$ on the determinant line bundle of the induced $\Spinc$-structure, by the map $\KInd_K^G$. In the $\Spin$-setting, both connections $\nabla^N$ and $\nabla^M$ are trivial connections on trivial line bundles. Hence one gets
\[
\KInd_K^G[\dirac_N] = [\dirac_M],
\] 
and the result follows.
\end{proof}

\medskip
\noindent \emph{Proof of Theorems \ref{thm:H-M} and \ref{thm:H-M2}.}
Consider the setting of Theorem \ref{thm:H-M}, let $N\subset M$ be as in Theorem \ref{thm:abels}. Consider a $K$-equivariant $\Spin$-structure on $N$ as in Lemma \ref{lem Spin slice}. By Proposition \ref{prop quant ind}, we have
\begin{equation} \label{eq quant ind pf}
\ind_G(\dirac_M) = \DInd_K^G\bigl( \ind_K(\dirac_N)\bigr).
\end{equation}
The stabiliser of a point $m \in M$ is a maximal compact subgroup of $G$ if and only if $m \in M_{(K)}$. Hence, by Lemma \ref{lem fixed pts}, the condition on the stabilisers of the action by $G$ on $M$ is equivalent to the action by $K$ on $N$ being nontrivial. So if $N$ is even-dimensional, Theorem \ref{thm:A-H} implies that
\[
\ind_K(\dirac_N) = 0,
\]
and Theorem \ref{thm:H-M} follows. (If $N$ is odd-dimensional, this index always equals zero.)

To prove Theorem \ref{thm:H-M2}, we note that by
 injectivity of Dirac induction, the equality \eqref{eq quant ind pf} implies that 
\[
\ind_G(\dirac_M) \not=0 \quad \Leftrightarrow \quad \ind_K(\dirac_N) \not=0. 
\]
Furthermore,
\[
 \ind_K(\dirac_N) \not=0 \quad \Leftrightarrow \quad \text{$K$ acts trivially on $N$ and $\hat A(N)\not=0$}.
\]
This equivalence follows from Theorem  \ref{thm:A-H}, because if $K$ acts trivially on $N$, then 
\mbox{$\ind_K(\dirac_N)$} equals $\ind(\dirac_N) = \hat A(N)$ copies of the trivial representation. Since $K$ acts trivially on $N$ if and only if $M = G/K \times N$, the claim follows.
\hfill $\square$

\begin{remark}
In the proof of Theorem \ref{thm:A-H} in \cite{AH}, the fixed point set of the action plays an important role. Because proper actions by noncompact groups do not have fixed points, the authors think it is unlikely that the arguments in \cite{AH} generalise directly to a proof of Theorems \ref{thm:H-M} and \ref{thm:H-M2}. This is one reason to apply the induction argument used here.
\end{remark}

It remains to prove Corollaries \ref{cor:rigidity} and \ref{cor:Ahat}.

\medskip \noindent
\emph{Proof of Corollary \ref{cor:rigidity}.}
If $\ind_G(\dirac_M) = 0$, the result holds trivially.  If \hbox{$\ind_G(\dirac_M)$} is nonzero, then we have the decomposition  $M \cong G/K \times N$ of Theorem \ref{thm:H-M2}.
By multiplicativity of the analytic assembly map (see Theorem 5.2 in \cite{HochsDS}), we then conclude that
\begin{equation} \label{eq ind M N GK}
\ind_G(\dirac_M) = \ind(\dirac_N) \ind_G(\dirac_{G/K}) = \ind(\dirac_N) \DInd_K^G[\C].
\end{equation}
\hfill $\square$

\medskip\noindent
\emph{Proof of Corollary \ref{cor:Ahat}.}
 Let $\tau\colon C^*_{\red}G \to \C$ be the von Neumann trace, determined by
 \[
 \tau\bigl(R(f)^*R(f) \bigr)=\int_G |f(g)|^2 \mathrm{d}g,
 \]
 for $f\in L^1(G) \cap L^2(G)$, where $R$ denotes the right regular representation. This induces a morphism $\tau_* \colon K_0(C^*_{\red}G) \to \R$. Since $M$ is even-dimensional, we have $\ind_G(\dirac_M) \in K_0(C^*_rG)$.  
 Wang showed in Theorem 6.12 in \cite{Wang} that
 \[
 \Aav(M) = \tau_*(\ind_G(\dirac_M)).
 \]
 So $\ind_G(\dirac_M) = 0$ implies $\Aav(M)=0$. Furthermore,
 \[
 \tau_* \bigl( \DInd_K^G[\C]\bigr) = \Aav(G/K).
 \] 
 Hence Theorem \ref{thm:H-M}, Theorem  \ref{thm:H-M2} and Corollary \ref{cor:rigidity} imply Corollary \ref{cor:Ahat}.
\hfill $\square$


\end{document}